\newtheorem{thm}{Theorem}
\newtheorem{prop}[thm]{Proposition}
\newtheorem{conj}{Conjecture}
\newcommand{\Q}{\mathbb{Q}}
\newcommand{\Z}{\mathbb{Z}}
\renewcommand{\P}{\mathbb{P}}
\newcommand{\M}{\mathcal{M}}
\newcommand{\Mbar}{\overline{\M}}
\DeclareMathOperator{\DR}{DR}
\DeclareMathOperator{\Aut}{Aut}
\title{Generalized boundary strata classes}
\author{A. Pixton}
\begin{document}

\maketitle

\begin{abstract}
We describe a generalization of the usual boundary strata classes in the Chow ring of $\Mbar_{g,n}$. The generalized boundary strata classes additively span a subring of the tautological ring. We describe a multiplication law satisfied by these classes and check that every double ramification cycle lies in this subring.
\end{abstract}

\setcounter{section}{-1}
\section{Introduction}
Let $g,n\ge 0$ satisfy $2g-2+n > 0$. These notes will be concerned with certain classes in the Chow ring of the moduli space $\Mbar_{g,n}$ of stable curves of genus $g$ with $n$ marked points. All of our classes will be {\it tautological}; that is, they are contained in a certain subring $R^*(\Mbar_{g,n}) \subseteq A^*(\Mbar_{g,n})$, the tautological ring. The full structure of the tautological ring is not well understood, though a large family of relations is known \cite{PPZ}.

The double ramification (DR) cycle is an important class in $R^*(\Mbar_{g,n})$, defined in terms of the virtual class in relative Gromov-Witten theory. Informally, the DR cycle parametrizes curves admitting a map to $\P^1$ with specified ramification profiles over two points. Faber and Pandharipande \cite{FP:relative} proved that the DR cycle lies in the tautological ring, and then an explicit formula for the DR cycle in terms of basic tautological classes was recently proven in \cite{JPPZ}. Trying to simplify this formula was one of the main motivations for these notes.

We will define the {\it boundary subring} $B^*(\Mbar_{g,n})$ of the tautological ring $R^*(\Mbar_{g,n})$. The boundary subring is defined as the additive span of certain generalized boundary strata classes $[\Gamma]$ corresponding to topological types of prestable curves. The boundary subring has a convenient multiplication law and may be of interest in its own right. The DR cycle is contained in the boundary subring and admits a simpler formula in terms of these generalized boundary strata classes.

In Section~\ref{sec:tautological}, we recall some of the basic definitions and properties of the tautological ring and fix some notation for dual graphs. In Section~\ref{sec:boundary}, we define the generalized boundary strata classes and describe the multiplication law in the boundary subring. In Section~\ref{sec:gfree}, we define the genus-free boundary subring, a natural subring of the boundary subring.
Finally, in Section~\ref{sec:DR} we explain how to write the double ramification cycle in terms of generalized boundary strata classes.

\subsection{Acknowledgements}
I would like to thank G. Oberdieck for useful discussions when first trying to define generalized boundary strata classes. I would also like to thank an anonymous referee.

I was supported by a fellowship from the Clay Mathematics Institute.

\section{The tautological ring}\label{sec:tautological}
Following Faber and Pandharipande \cite{Faber-Pandharipande}, the tautological rings $R^*(\Mbar_{g,n})$ are defined simultaneously for all $g,n\ge 0$ satisfying $2g-2+n>0$ as the smallest subrings of the Chow ring $A^*(\Mbar_{g,n})$ closed under pushforward by forgetful maps $\Mbar_{g,n+1}\to\Mbar_{g,n}$ and gluing maps $\Mbar_{g,n+2}\to\Mbar_{g+1,n}$ or $\Mbar_{g_1,n_1+1}\times\Mbar_{g_2,n_2+1}\to\Mbar_{g_1+g_2,n_1+n_2}$. In this section we will recall a more explicit description of the tautological ring given by Graber and Pandharipande \cite[Appendix A]{Graber-Pandharipande}. They described a set of additive generators and a multiplication law satisfied by these generators.

The generators can be indexed by connected finite graphs decorated with certain additional structures. In this paper we will be interested in several slightly different types of decorated graphs, so we go through these structures one at a time. We begin by letting a {\it graph with $n$ legs} mean a connected finite graph $\Gamma$ along with a sequence of vertices (possibly with repetition) $l_1,l_2,\ldots,l_n\in V(\Gamma)$ of length $n\ge 0$. Here $l_i$ is the location of the $i$th leg of $\Gamma$. We think of legs as half-edges, so the valence $n_v$ of a vertex $v$ includes a contribution of one for each $l_i$ equal to $v$.

Then a {\it graph of genus $g$ with $n$ legs} is a graph $\Gamma$ with $n$ legs along with a nonnegative integer $g_v$ for each vertex $v\in V(\Gamma)$, satisfying the equality
\[
g = h_1(\Gamma) + \sum_{v\in V(\Gamma)}g_v,
\]
where $h_1(\Gamma) = |E(\Gamma)| - |V(\Gamma)| + 1$ is the cycle number of $\Gamma$.

Next, a graph $\Gamma$ of genus $g$ with $n$ legs is {\it stable} if it satisfies the condition
\[
2g_v-2+n_v > 0
\]
at each vertex $v\in V(\Gamma)$. We will usually just call such a graph a {\it stable graph} and treat the values of $g$ and $n$ as understood.

Stable graphs correspond to strata in the boundary stratification of $\Mbar_{g,n}$ as follows. First, given a stable graph $\Gamma$ we write
\[
\Mbar_{\Gamma} := \prod_{v\in V(\Gamma)}\Mbar_{g_v,n_v}.
\]
The edges of $\Gamma$ then define a gluing map
\[
\xi_\Gamma:\Mbar_\Gamma\to \Mbar_{g,n}.
\]
The image of this map is the closed boundary stratum corresponding to $\Gamma$, the closure of the locus of curves with dual graph $\Gamma$. Up to a factor of $|\Aut(\Gamma)|$, the class of this stratum is equal to the pushforward ${\xi_\Gamma}_*1$.

The Chow ring of each factor $\Mbar_{g_v,n_v}$ of $\Mbar_\Gamma$ contains basic tautological classes: the Arbarello-Cornalba \cite{Arbarello-Cornalba} kappa classes $\kappa_j$ ($j>0$) of codimension $j$ and the cotangent line classes $\psi_i$ ($1\le i \le n_v$) of codimension $1$. We can then consider classes
\[
{\xi_\Gamma}_*\alpha \in A^*(\Mbar_{g,n})
\]
given any stable graph $\Gamma$ and any monomial $\alpha$ in the kappa and psi classes on the factors of $\Mbar_\Gamma$. These are precisely the additive generators for the tautological ring considered in \cite{Graber-Pandharipande}.

In other words, the generators can be indexed by the data of a stable graph $\Gamma$ with additional kappa and psi decorations (on the vertices and half-edges respectively). We will describe this as a stable graph $\Gamma$ with a monomial decoration $\alpha$.

The multiplication rule for these classes given by Graber and Pandharipande \cite[eq. (11)]{Graber-Pandharipande} has a somewhat complicated form. Let $\Gamma_1,\Gamma_2$ be two stable graphs with monomial decorations $\alpha_1,\alpha_2$ respectively. Then
\begin{equation}\label{eq:GP_mult}
({\xi_{\Gamma_1}}_*\alpha_1) \cdot ({\xi_{\Gamma_1}}_*\alpha_2) = \sum_{\Gamma,\widetilde{\alpha}_1,\widetilde{\alpha}_2,E_1,E_2}\frac{1}{\Aut(\Gamma)}\mathbf{c}\cdot{\xi_\Gamma}_*(\widetilde{\alpha}_1\widetilde{\alpha}_2\Psi).
\end{equation}
This formula requires some explanation. First, the sum runs over (representatives of isomorphism classes of) stable graphs $\Gamma$ along with two monomial decorations $\widetilde{\alpha}_1,\widetilde{\alpha}_2$ on $\Gamma$ and two disjoint sets of edges $E_1,E_2\subseteq E(\Gamma)$ such that $\widetilde{\alpha_i}$ does not use any psi classes located on the edges in $E_i$.

The coefficient $\mathbf{c}\in\Z_{\ge 0}$ then counts the number of ways to choose isomorphisms (of monomial-decorated graphs) $(\Gamma/E_i,\widetilde{\alpha}_i)\cong (\Gamma_i,\alpha_i)$ for $i=1,2$, where $\Gamma/E_i$ is the stable graph formed by contracting the edges in $E_i$ (and combining vertex genera in the natural way, including adding one whenever a loop is contracted). The factor $\Psi$ is the product of $(-\psi_1-\psi_2)$ over all edges in $E(\Gamma)\setminus(E_1\cup E_2)$, where $\psi_1,\psi_2$ are the $\psi$ classes on the two sides of the edge.

Note that the multiplication law \eqref{eq:GP_mult} is quite complicated even when the monomials $\alpha_1,\alpha_2$ are both equal to $1$. In particular, the product of two boundary strata classes $({\xi_{\Gamma_1}}_*1)\cdot({\xi_{\Gamma_2}}_*1)$ is not necessarily a sum of boundary strata classes because of the $\Psi$ factor. The generalized boundary strata classes defined in the next section will simultaneously fix this issue and yield a simpler multiplication law.

\section{Generalized boundary strata classes}\label{sec:boundary}
The goal of this section is to define classes
\[
[\Gamma]\in R^d(\Mbar_{g,n})
\]
for any graph $\Gamma$ of genus $g$ with $n$ legs (with $2g-2+n > 0$), where $d = |E(\Gamma)|$ is the number of edges in $\Gamma$. The definition will satisfy $[\Gamma] = {\xi_{\Gamma}}_*1$ for any stable graph $\Gamma$. When $\Gamma$ is unstable, $[\Gamma]$ will provide a convenient generalization of the usual boundary strata classes. For example, if $\Gamma$ is the graph with two vertices connected by a single edge, and one of the vertices is genus $0$ and has leg $i$ and no other legs, then we will have $[\Gamma] = -\psi_i$, the first Chern class of the line bundle on $\Mbar_{g,n}$ given by the tangent space at the $i$th marked point.

We define $[\Gamma]$ in two steps. First we define $[\Gamma]$ for graphs of genus $g$ with $n$ legs satisfying the semistability condition $2g_v-2+n_v \ge 0$ at every vertex $v\in V(\Gamma)$. In other words, the only type of unstable vertex that can occur is a genus $0$ vertex of valence $2$. These unstable vertices are naturally partitioned into paths, and there are two different types of paths of unstable vertices: a path of $k \ge 1$ unstable vertices between two stable vertices $v_1,v_2$ (possibly equal) or a path of $k \ge 1$ unstable vertices between a stable vertex $v$ and a leg labeled $i$.

In the first case, we contract the path to a single edge between $v_1$ and $v_2$ and decorate it with $(-\psi_1-\psi_2)^k/(k+1)!$. In the second case, we contract the path to a single leg on $v$ labeled $i$ and decorate it with $(-\psi)^k/k!$. After performing these actions to every path of unstable vertices, we are left with a stable graph $\Gamma_{\text{cont}}$ decorated by a polynomial $\alpha$ in the psi classes on $\Mbar_{\Gamma_{\text{cont}}}$. We then set
\[
[\Gamma] := {\xi_{\Gamma_{\text{cont}}}}_*\alpha,
\]
so $[\Gamma]$ is a linear combination of the usual tautological ring generators described in Section~\ref{sec:tautological}.

Now we define $[\Gamma]$ for general graphs $\Gamma$ of genus $g$ with $n$ legs. Let $v_1,\ldots,v_m$ be the vertices of $\Gamma$ with genus $0$ and valence $1$ (in some order). Create a new graph $\Gamma'$ by attaching new legs labeled $n+1,\ldots,n+m$ to $v_1,\ldots,v_m$ respectively (i.e. $l_{n+i} = v_i$). Then $2g_v-2+n_v \ge 0$ at every vertex $v\in V(\Gamma')$, so we can use our previous definition to obtain a class $[\Gamma'] \in R^d(\Mbar_{g,n+m})$. Then take
\[
[\Gamma] := (\pi_m)_*((-\psi_{n+1})\cdots(-\psi_{n+m})[\Gamma']) \in R^d(\Mbar_{g,n}),
\]
where $\pi_m:\Mbar_{g,n+m}\to\Mbar_{g,n}$ is the map forgetting the last $m$ markings. (This pushforward will create kappa classes in general: see \cite[eq. (1.12)]{Arbarello-Cornalba}.)

This completes the definition of $[\Gamma]$ for any graph $\Gamma$ of genus $g$ with $n$ legs. We now let
\[
B^*(\Mbar_{g,n})\subseteq R^*(\Mbar_{g,n})
\]
be the additive span of the classes $[\Gamma]$ inside the tautological ring. We have the following multiplication law, a much simpler version of \eqref{eq:GP_mult}:
\begin{prop}\label{prop:mult1}
Let $\Gamma_1,\Gamma_2$ be two graphs of the same genus $g$ and number of legs $n$. Then
\[
[\Gamma_1]\cdot[\Gamma_2] = \sum_{\Gamma}\frac{1}{|\Aut(\Gamma)|}c_{\Gamma_1,\Gamma_2,\Gamma}[\Gamma],
\]
where $c_{\Gamma_1,\Gamma_2,\Gamma}$ is the number of ways of choosing a partition of the edges $E(\Gamma) = E_1 \sqcup E_2$ along with isomorphisms $\Gamma/E_i \cong \Gamma_i$ for $i=1,2$.
\end{prop}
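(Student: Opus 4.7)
The plan is to apply the Graber--Pandharipande multiplication \eqref{eq:GP_mult} to stable representatives of $[\Gamma_1]$ and $[\Gamma_2]$, then repackage the resulting sum of ${\xi_\Gamma}_*$-pushforwards as a sum of generalized boundary strata classes via the definition of $[\Gamma]$. I would proceed in three stages: stable $\Gamma_1,\Gamma_2$, then semistable, then general.

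For Stage 1 (stable case), $[\Gamma_i]={\xi_{\Gamma_i}}_*1$, and \eqref{eq:GP_mult} gives a sum over triples $(\Gamma,E_1,E_2)$ with $\Gamma$ stable, $E_1\cap E_2=\emptyset$, and $\Gamma/E_i\cong\Gamma_i$, weighted by $\frac{\mathbf{c}}{|\Aut\Gamma|}{\xi_\Gamma}_*\!\bigl(\prod_{e\in E_{sh}}(-\psi_1-\psi_2)_e\bigr)$, with $E_{sh}:=E(\Gamma)\setminus(E_1\cup E_2)$. The decisive observation is that the semistable graph $\tilde\Gamma$ built from $\Gamma$ by inserting one unstable vertex on each shared edge satisfies
\[
[\tilde\Gamma]=2^{-|E_{sh}|}\,{\xi_\Gamma}_*\!\bigg(\prod_{e\in E_{sh}}(-\psi_1-\psi_2)_e\bigg)
\]
directly from the semistable definition. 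Identifying the resulting sum with the Proposition's RHS reduces to: (i) for stable $\Gamma_i$, any $\tilde\Gamma$ contributing to the RHS must be semistable with at most one unstable vertex per path (otherwise $\tilde\Gamma/\tilde E_i$ fails to be stable), so the map $\tilde\Gamma\mapsto(\Gamma,E_{sh})$ is a bijection on contributing terms; (ii) the count $c_P$ factors as $2^{|E_{sh}|}$ (choice of which of the two path edges lies in $\tilde E_1$, per shared edge) times the isomorphism counts comprising $\mathbf{c}$.

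For Stage 2 (semistable case), I would expand $[\Gamma_i]={\xi_{\Gamma_{i,\text{cont}}}}_*(\alpha_i)$ with the prescribed psi-decoration and apply \eqref{eq:GP_mult}. The per-edge combinatorial content is the identity
\[
\frac{1}{(k_1+1)!\,(k_2+1)!}=\binom{k_1+k_2+2}{k_1+1}\cdot\frac{1}{(k_1+k_2+2)!}
\]
on each shared edge carrying paths of lengths $k_1,k_2$ in $\Gamma_1,\Gamma_2$: the corresponding path in $\tilde\Gamma$ is forced to have $k_1+k_2+1$ unstable vertices by the partition constraint, the LHS is the coefficient of $(-\psi_1-\psi_2)^{k_1+k_2+1}$ in $\widetilde{\alpha}_1\widetilde{\alpha}_2\Psi$ (with $\Psi$ supplying the ``$+1$''), the second factor on the RHS is the Prop coefficient in $[\tilde\Gamma]$, and the binomial coefficient counts valid partitions of the $k_1+k_2+2$ path edges between $\tilde E_1$ and $\tilde E_2$. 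The parallel identity $\binom{k_1+k_2}{k_1}=(k_1+k_2)!/(k_1!\,k_2!)$ handles leg paths decorated by $(-\psi)^k/k!$. Stage 3 then reduces the general case to the semistable one by unwinding $[\Gamma_i]=(\pi_{m_i})_*(\prod_j(-\psi_{n+j})[\Gamma_i'])$ with $\Gamma_i'$ semistable and applying the projection formula on $\Mbar_{g,n+m_1+m_2}$ to pull both factors to a common space.

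The main obstacle is the bookkeeping in Stage 2: one must simultaneously track the monomial expansions of the $\alpha_i$, the $\Psi$ factor from \eqref{eq:GP_mult}, and the automorphism counts of both $\Gamma$ and $\tilde\Gamma$, and verify that the two sides reassemble identically. The per-edge identities above are elementary, but aligning the $|\Aut|$ denominators globally and confirming the claimed bijection between GP-terms and Prop-terms is the delicate part.
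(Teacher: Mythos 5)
Your proposal is correct and follows essentially the same route as the paper: both reduce to the Graber--Pandharipande product \eqref{eq:GP_mult} and match terms path-by-path, and your per-edge identities $\frac{1}{(k_1+1)!\,(k_2+1)!}=\binom{k_1+k_2+2}{k_1+1}\frac{1}{(k_1+k_2+2)!}$ and $\binom{k_1+k_2}{k_1}=\frac{(k_1+k_2)!}{k_1!\,k_2!}$ are exactly the paper's two displayed identities (with $m_i=k_i+1$, resp.\ $m_i=k_i$). The only differences are organizational: you case-split on the inputs $\Gamma_1,\Gamma_2$ where the paper case-splits on the output graph $\Gamma$, and your final stage substitutes a projection-formula reduction for the paper's explicit kappa-class matching --- a step the paper likewise only sketches and omits ``for brevity.''
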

\begin{proof}
This follows from \eqref{eq:GP_mult} along with the definition of $[\Gamma]$. We sketch part of the proof here to give the idea.

Suppose that we have chosen a graph $\Gamma$ along with a choice of the data counted by $c_{\Gamma_1,\Gamma_2,\Gamma}$ (i.e. $E_1$, $E_2$, and isomorphisms $\Gamma/E_i \cong \Gamma_i$). We want to match the contribution $[\Gamma]$ with (part of) the general multiplication law \eqref{eq:GP_mult}.

First, if $\Gamma$ is a stable graph then $\Gamma_1,\Gamma_2$ are also stable (since contractions of a stable graph are stable). Then $[\Gamma_i] = {\xi_{\Gamma_i}}_*1$ and the same data ($E_1$, $E_2$, isomorphisms) describes a term $[\Gamma]$ in \eqref{eq:GP_mult}.

Next, suppose $\Gamma$ is a semistable graph. Then as previously explained, the unstable vertices can be partitioned into paths of two types: those between two stable vertices and those between a stable vertex and a leg. In either case, suppose such a path consists of $m$ edges. Then these edges are partitioned between $E_1$ and $E_2$; suppose that $m_i$ of them are in $E_i$, so $m_1 + m_2 = m$. For the contracted graphs $\Gamma/E_i$ the values of $m_1$ and $m_2$ are all that matter, so the contribution appears with multiplicity $\binom{m}{m_1}$. Then the matching with \eqref{eq:GP_mult} follows from the simple identities
\[
\frac{(-\psi_1-\psi_2)^{m_1-1}}{m_1!}\cdot\frac{(-\psi_1-\psi_2)^{m_2-1}}{m_2!}\cdot(-\psi_1-\psi_2) = \binom{m}{m_1}\cdot\frac{(-\psi_1-\psi_2)^{m-1}}{m!}
\]
and
\[
\frac{(-\psi)^{m_1}}{m_1!}\cdot\frac{(-\psi)^{m_2}}{m_2!} = \binom{m}{m_1}\cdot\frac{(-\psi)^m}{m!}
\]
for the two types of paths. In the first case, the extra factor of $(-\psi_1-\psi_2)$ comes from the factor $\Psi$ in \eqref{eq:GP_mult}.

The case where $\Gamma$ is non-semistable is handled similarly. Here the definition of $[\Gamma]$ produces polynomials in the kappa classes (via the pushforward formula for psi class monomials, see \cite[eq. (1.13)]{Arbarello-Cornalba}), and a slightly more complicated combinatorial identity is needed to get the desired matching with terms in \eqref{eq:GP_mult}; we omit this for brevity.
\end{proof}

We conclude that $B^*(\Mbar_{g,n})$ is a ring. We call it the {\it boundary subring}.

Let $\mathcal{G}_{g,n}$ be a $\Q$-vector space with basis indexed by isomorphism classes of graphs of genus $g$ with $n$ legs. The multiplication law above (Proposition~\ref{prop:mult1}) defines a multiplication operation on $\mathcal{G}_{g,n}$. It is easily checked that this formal graph algebra $\mathcal{G}_{g,n}$ is a commutative, associative graded $\Q$-algebra with unit given by the graph with no edges, and then $[-]:\mathcal{G}_{g,n}\to R^*(\Mbar_{g,n})$ is a homomorphism with image $B^*(\Mbar_{g,n})$.

\vspace{5mm}
\noindent{\bf Remark.}
It is worth noting that the multiplication law becomes even simpler if we rescale our generalized boundary strata class by a factor of $|\Aut(\Gamma)|$, since we can rewrite it as
\[
\frac{[\Gamma_1]}{|\Aut(\Gamma_1)|}\cdot\frac{[\Gamma_2]}{|\Aut(\Gamma_2)|} = \sum_{\Gamma}c'_{\Gamma_1,\Gamma_2,\Gamma}\frac{[\Gamma]}{|\Aut(\Gamma)|},
\]
where $c'_{\Gamma_1,\Gamma_2,\Gamma}$ is the number of ways of choosing a partition of the edges $E(\Gamma) = E_1 \sqcup E_2$ such that $\Gamma/E_i$ is isomorphic to $\Gamma_i$ for $i=1,2$. This also makes it easier to check that the multiplication is associative. However, our chosen normalization makes the statement of Proposition~\ref{prop:forget} below much nicer.
\vspace{5mm}

The most important property of the classes $[\Gamma]$ has already been discussed: they generate a subring $B^*(\Mbar_{g,n})$ with a relatively simple multiplication law. They also satisfy simple identities under pullback and pushforward by forgetful maps:
\begin{prop}\label{prop:forget}
Let $g,n\ge 0$ satisfy $2g-2+n > 0$. Let $\Gamma$ be a graph of genus $g$ with $n$ legs. Then we have:
\begin{enumerate}
\item If $\pi:\Mbar_{g,n+1}\to\Mbar_{g,n}$ is the map forgetting marking $n+1$, then
\[
\pi^*[\Gamma] = \sum_{v\in V(\Gamma)}[\Gamma_v],
\]
where $\Gamma_v$ is formed from $\Gamma$ by attaching leg $n+1$ at vertex $v$.
\item If $n>0$ and $\pi:\Mbar_{g,n}\to\Mbar_{g,n-1}$ is the map forgetting marking $n$, then
\[
\pi_*(\psi_n[\Gamma]) = (2g_v-2+n_v)[\Gamma'],
\]
where $\Gamma'$ is the graph formed from $\Gamma$ by removing the $n$th leg and $g_v,n_v$ are the genus and valence in $\Gamma'$ at the vertex where the leg was removed.
\end{enumerate}
\end{prop}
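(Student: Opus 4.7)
The plan is to reduce both (1) and (2) to the case where $\Gamma$ is stable by unwinding the two-step definition of $[\Gamma]$, invoking standard compatibilities of the forgetful map $\pi$ with the gluing maps $\xi_\Gamma$, with the psi pullback identity $\pi^*\psi_i = \psi_i - D_{i,n+1}$, and with the pushforwards $\pi_*\psi^{k+1} = \kappa_k$.

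For part (1), first for stable $\Gamma$, the standard fiber square relating $\pi$ to $\xi_\Gamma$ gives $\pi^*{\xi_\Gamma}_*(1) = \sum_{v\in V(\Gamma)}{\xi_{\Gamma_v}}_*(1)$, since the preimage of the boundary stratum is covered by the loci where the new marking lies on a particular component. When $\Gamma$ is semistable but not stable, I would write $[\Gamma] = {\xi_{\Gamma_{\text{cont}}}}_*(\alpha)$ and apply the same fiber diagram; the correction terms $D_{i,n+1}$ arising from $\pi_v^*\psi_i = \psi_i - D_{i,n+1}$ are themselves generalized boundary strata classes and exactly account for those summands $[\Gamma_v]$ where $v$ lies in the interior of a path of unstable vertices of $\Gamma$. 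For general $\Gamma$, I would apply $\pi^*$ to the defining formula $[\Gamma] = (\pi_m)_*\bigl((-\psi_{n+1})\cdots(-\psi_{n+m})[\Gamma']\bigr)$ and use the commutativity of $\pi^*$ with $(\pi_m)_*$ (via a standard fiber diagram of forgetful maps) to reduce to the semistable case.

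For part (2), in the stable case with $v$ still stable after removing leg $n$, the compatibility $\pi\circ\xi_\Gamma = \xi_{\Gamma'}\circ\pi_v$ together with the projection formula and the identity $\pi_{v*}\psi_n = \kappa_0 = 2g_v - 2 + n_v$ on $\Mbar_{g_v, n_v}$ yields the formula at once. When removing leg $n$ destabilizes $v$ (so that $(g_v, n_v) = (0, 2)$ in $\Gamma'$), the coefficient vanishes and so does $\pi_*(\psi_n[\Gamma])$, because the factor $\Mbar_{0,3}$ is a point on which $\psi_n = 0$. For semistable or general $\Gamma$ one unwinds the definitions of $[\Gamma]$ and $[\Gamma']$ in parallel; the pushforward of psi monomials produces kappa classes on both sides that must agree after bookkeeping.

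The main obstacle will be this matching in part (2) when $v$ is itself unstable in $\Gamma$: then leg $n$ sits on some stable vertex $w$ of $\Gamma_{\text{cont}}$ carrying a psi-power decoration, and simultaneously $\Gamma'$ may fail to be semistable, so $[\Gamma']$ requires the full general definition. The kappa classes produced on the two sides via $\pi_*\psi^{k+1} = \kappa_k$ must be reconciled using combinatorial identities of the same flavor as those appearing in the proof of Proposition~\ref{prop:mult1}; no new ideas are needed, but the bookkeeping is the heart of the proof.
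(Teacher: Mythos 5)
Your proposal is correct and follows essentially the same route the paper intends: the paper offers no written-out proof, only the remark that the statement is a straightforward combinatorial consequence of the standard pullback/pushforward formulas for decorated strata classes under forgetful maps, and your outline (reduce to the stable case via $\pi^*\psi_i=\psi_i-D_{i,n+1}$, $\pi_*\psi^{k+1}=\kappa_k$, and base change against $\xi_\Gamma$ and $\pi_m$, then match the resulting corrections with the summands $[\Gamma_v]$, respectively with the $\kappa$-classes in $[\Gamma']$, by binomial-type identities as in Proposition~\ref{prop:mult1}) is exactly that argument. The cases you flag as requiring bookkeeping are indeed the only delicate ones, and spot-checks (e.g.\ $\pi_*(\psi_n\cdot(-\psi_n)^k/k!)=(-1)^k\kappa_k/k!$ against $(2g_v-2+n_v)=-1$ for a terminal valence-one vertex) confirm the matching works.
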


As with Proposition~\ref{prop:mult1}, this proposition is a straightforward combinatorial consequence of the analogous, more complicated results for general tautological classes.

\section{Genus-free boundary classes}\label{sec:gfree}
We will now define a natural subring of the boundary subring $B^*(\Mbar_{g,n})$. The idea is to sum over all possible distributions of the available genus across the vertices of the graph. Let $G$ be a graph with $n$ legs (but no genus assignments). For any nonnegative integer $g$ satisfying $2g-2+n > 0$, we define
\[
[G]_g := \sum_{\substack{g_v \ge 0\text{ for }v \in V(G) \\ \sum g_v = g - h_1(G)}}[\Gamma] \in R^*(\Mbar_{g,n}),
\]
where the $\Gamma$ inside the sum is the genus-labeled graph given by $G$ and the $g_v$.

We then immediately have from Proposition~\ref{prop:mult1} that these genus-free boundary classes additively span a subring $GB^*(\Mbar_{g,n})\subseteq B^*(\Mbar_{g,n})$, with the following multiplication law:
\begin{prop}\label{prop:mult2}
Let $n\ge 0$ and let $G_1, G_2$ be two graphs with $n$ legs. Then for any $g\ge 0$ satisfying $2g-2+n > 0$, we have
\[
[G_1]_g\cdot[G_2]_g = \sum_{G}\frac{1}{|\Aut(G)|}c_{G_1,G_2,G}[G]_g,
\]
where $c_{G_1,G_2,G}$ is the number of ways of choosing a partition of the edges $E(G) = E_1 \sqcup E_2$ along with isomorphisms $G/E_i \cong G_i$ for $i=1,2$.
\end{prop}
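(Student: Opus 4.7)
The plan is to expand $[G_1]_g \cdot [G_2]_g$ directly from the definition of the genus-free classes, apply Proposition~\ref{prop:mult1} termwise, and then reorganize the resulting double sum by the underlying unlabeled graph.

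First I would write
\[
[G_1]_g \cdot [G_2]_g = \sum_{(g^{(1)}),\,(g^{(2)})} [\Gamma_1] \cdot [\Gamma_2],
\]
where the outer sums range over genus assignments on $V(G_1)$ and $V(G_2)$ summing to $g - h_1(G_i)$ respectively, and $\Gamma_i$ denotes $G_i$ equipped with the chosen genera. Applying Proposition~\ref{prop:mult1} to each product and interchanging the order of summation produces
\[
[G_1]_g \cdot [G_2]_g = \sum_{\Gamma} \frac{N_\Gamma}{|\Aut(\Gamma)|}\,[\Gamma], \qquad N_\Gamma := \sum_{(g^{(1)}),\,(g^{(2)})} c_{\Gamma_1, \Gamma_2, \Gamma},
\]
where the outer sum is over isomorphism classes of genus-labeled graphs $\Gamma$.

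Next I would identify $N_\Gamma$ with $c_{G_1, G_2, G}$ for $G = \underline{\Gamma}$ the underlying unlabeled graph. Each contribution to $c_{\Gamma_1, \Gamma_2, \Gamma}$ is a partition $E(\Gamma) = E_1 \sqcup E_2$ together with genus-labeled isomorphisms $\phi_i \colon \Gamma/E_i \cong \Gamma_i$. The key observation is that, once the underlying graph isomorphism $\underline{\Gamma/E_i} \cong G_i$ is fixed, the genus assignment $g^{(i)}$ on $V(G_i)$ is uniquely forced by pulling back the genera on $\Gamma/E_i$; conversely, every term counted by $N_\Gamma$ arises in this way. Hence $N_\Gamma$ equals the number of partitions $E_1 \sqcup E_2$ together with graph isomorphisms $\underline{\Gamma/E_i} \cong G_i$, which is precisely $c_{G_1, G_2, G}$ and depends only on $G$.

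Finally I would group terms by $G$. Orbit--stabilizer applied to the action of $\Aut(G)$ on the set of genus assignments gives the identity
\[
\frac{[G]_g}{|\Aut(G)|} = \sum_{[\Gamma] \colon \underline{\Gamma} = G} \frac{[\Gamma]}{|\Aut(\Gamma)|},
\]
since each isomorphism class $[\Gamma]$ with underlying graph $G$ appears $|\Aut(G)|/|\Aut(\Gamma)|$ times in $[G]_g$. Substituting this into the preceding display yields the claimed formula. The main bookkeeping obstacle is distinguishing automorphisms preserving genus from those of the underlying graph alone, and correspondingly genus-labeled from unlabeled isomorphisms in the combinatorial factors; once this is in place, the proposition reduces to the formal manipulations above.
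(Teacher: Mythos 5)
Your argument is correct and follows essentially the same route as the paper, which simply asserts that Proposition~\ref{prop:mult2} is immediate from Proposition~\ref{prop:mult1} by summing over genus distributions. Your proposal just supplies the bookkeeping behind that assertion: the observation that a graph isomorphism $\underline{\Gamma/E_i}\cong G_i$ forces a unique (and admissible) genus assignment on $G_i$, so that $\sum c_{\Gamma_1,\Gamma_2,\Gamma}=c_{G_1,G_2,G}$ depends only on $\underline{\Gamma}$, together with the orbit--stabilizer identity $[G]_g/|\Aut(G)|=\sum_{\Gamma}[\Gamma]/|\Aut(\Gamma)|$.
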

We call this subring $GB^*(\Mbar_{g,n})$ the {\it genus-free boundary subring}. As before, we can define a formal graph algebra $\mathcal{G}_n$ (now without genus information in the graphs) and view $[-]_g$ as a homomorphism from this algebra to $BG^*(\Mbar_{g,n})$.

Since the structure of $\mathcal{G}_n$ is genus-invariant, it is natural to make the following stability conjecture about the genus-free boundary subring:
\begin{conj}
For fixed $d,n\ge 0$, the homomorphism $[-]_g: \mathcal{G}_n \to BG^*(\Mbar_{g,n})$ is an isomorphism in degree $d$ for all sufficiently large $g$.
\end{conj}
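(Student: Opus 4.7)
The plan is to prove injectivity of $[-]_g: \mathcal{G}_n \to GB^*(\Mbar_{g,n})$ in degree $d$ for $g$ sufficiently large; surjectivity is immediate from the definition. The degree-$d$ part of $\mathcal{G}_n$ is the finite-dimensional $\Q$-vector space spanned by isomorphism classes of graphs $G$ with $n$ legs and exactly $d$ edges, and its dimension does not depend on $g$. The content of the conjecture is thus that every linear relation among the classes $[G]_g\in R^d(\Mbar_{g,n})$ disappears once $g$ crosses a threshold depending only on $d$ and $n$.

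For each such $G$ I would first pick a stable genus labeling $\Gamma_G$ of $G$. This is possible once $g$ is large relative to $d$ and $n$, by placing enough genus at low-valence vertices of $G$. I would additionally arrange that each factor $\Mbar_{g_v,n_v}$ of $\Mbar_{\Gamma_G}$ has dimension much larger than $d$.

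The core step is to pull back each $[G']_g$ along the gluing map $\xi_{\Gamma_G}:\Mbar_{\Gamma_G}\to\Mbar_{g,n}$ and restrict to the open substack $\Mbar_{\Gamma_G}^{\circ}\subset\Mbar_{\Gamma_G}$ parametrizing curves whose dual graph is exactly $\Gamma_G$. Analyzing each term in the definition of $[G']_g$: the stable labeling $\Gamma_G$ itself, which only appears when $G'\cong G$, contributes the top Chern class of the normal bundle, $\prod_{e\in E(\Gamma_G)}(-\psi_e^--\psi_e^+)$, a nonzero class built purely from psi classes at the $2d$ new internal markings. Any other stable labeling of $G'$ gives a distinct codimension-$d$ stratum whose closure misses $\Mbar_{\Gamma_G}^{\circ}$, so its contribution vanishes on the open locus. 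Any unstable labeling contracts to a stable graph $\Gamma'_{\mathrm{cont}}$ with $d'<d$ edges; factoring through $\Mbar_{\Gamma'_{\mathrm{cont}}}$ whenever $\Gamma_G$ refines $\Gamma'_{\mathrm{cont}}$ via an edge contraction $\phi$ (and obtaining zero otherwise), together with the identity $\xi_{\Gamma'_{\mathrm{cont}}}^*\xi_{\Gamma'_{\mathrm{cont}}*}(\alpha)=\alpha\cdot\prod_{e\in E(\Gamma'_{\mathrm{cont}})}(-\psi_e^--\psi_e^+)$, the pull-back has the form
\[
\phi^*(\alpha)\cdot\prod_{e\text{ surviving under }\phi}(-\psi_e^--\psi_e^+),
\]
which involves psi classes at only the $d'<d$ surviving edges of $\Gamma_G$, together with kappa classes and psi classes at the original legs, but never psi classes at the contracted edges. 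A similar calculation with an extra forgetful pushforward handles the "general case" unstable contributions and produces only kappa and leg-psi data together with psi classes at surviving edges.

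Consequently, the coefficient of the distinguished monomial $\prod_{e\in E(\Gamma_G)}\psi_e^-$ (one psi at a specific endpoint of each internal edge) in $\xi_{\Gamma_G}^*[G']_g\big|_{\Mbar_{\Gamma_G}^{\circ}}$ is nonzero exactly when $G'\cong G$, yielding a diagonal matrix for the pairing of $\{[G]_g\}$ with these coefficient-extraction functionals and therefore injectivity, provided $\prod_{e}\psi_e^-$ is linearly independent in $R^d(\Mbar_{\Gamma_G}^{\circ})=R^d(\prod_v\M_{g_v,n_v})$ from every other monomial that can appear (internal psi at a proper subset of edges, together with kappa and leg psi). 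I expect this last independence check to be the main technical obstacle: it is a stable-range statement about tautological rings of open moduli, namely that in fixed codimension $d$ the monomials in psi classes at distinct markings remain linearly independent in $R^d(\M_{g,n})$ once $g$ is large. Concretely this can be detected by pairing against an explicit complementary monomial on $\prod_v\M_{g_v,n_v}$, and because $g$ is taken large each factor has ample dimension to accommodate such a pairing, completing the proof.
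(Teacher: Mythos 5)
First, a point of order: the statement you are proving is presented in the paper as a \emph{conjecture}, with no proof offered (the paper only records that the bound on $g$ is necessary, via $[\Gamma]_1=\kappa_1-\psi_1=0$ on $\Mbar_{1,1}$), so there is no argument of the author's to compare yours against and I can only assess the proposal on its own terms. Its skeleton is sensible: surjectivity is definitional, and your triangularity scheme --- pull back along $\xi_{\Gamma_G}$ for one stable labeling $\Gamma_G$ of each $G$, restrict to the open stratum $\prod_v\M_{g_v,n_v}$, and extract the coefficient of $\prod_e\psi_e^-$ --- correctly isolates the contribution of $G$ itself, since a stable labeling of a $G'\not\cong G$ with $d$ edges is never a contraction of $\Gamma_G$ and hence restricts to zero, while unstable labelings are supported on strata with fewer edges and contribute only monomials missing the psi classes at the contracted edges.

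However, the step you defer to the end is not a routine check but the entire content of the problem, and the justification you sketch for it is wrong. You need the degree-$d$ monomials in $\kappa$ and $\psi$ classes to be linearly independent in $R^d(\M_{g_v,n_v})$ (equivalently on the product, by K\"unneth), and you propose to verify this by ``pairing against an explicit complementary monomial,'' on the grounds that each factor has ``ample dimension.'' But $\M_{g,n}$ is not proper, so there is no intersection pairing into complementary degree; worse, dimension is not the relevant parameter at all: $R^j(\M_{g,n})=0$ for $j>g-1$ by Looijenga's theorem, so for small $g_v$ even a single monomial such as $\psi^d$ dies on $\M_{g_v,n_v}$ no matter how many markings you add to inflate the dimension. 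What you actually need is that each $g_v$ (not each $\dim\Mbar_{g_v,n_v}$) is large --- which forces $g$ to be at least roughly $(d+1)$ times a threshold $g_0(d)$, since $G$ can have $d+1$ vertices and you must distribute genus to \emph{every} vertex, not just the low-valence ones --- and then the independence is the statement that $\Q[\kappa_1,\kappa_2,\dots,\psi_1,\dots,\psi_{n_v}]\to H^*(\M_{g_v,n_v};\Q)$ is injective in degrees $\le d$ in the stable range. That is a deep theorem (Harer stability combined with Madsen--Weiss, or at least the Miller--Morita results for the polynomial subalgebra), not something obtainable by an explicit pairing on the open moduli space. If you import that theorem through the cycle class map, your triangularity argument plausibly closes; as written, the proof has a genuine gap exactly where the conjecture's difficulty lives.
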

For example, for $d = n = 1$ we do not have an isomorphism for $g=1$ since then $[\Gamma]_1 = \kappa_1 - \psi_1 = 0 \in R^*(\Mbar_{1,1})$ for $\Gamma$ the unique connected graph with two vertices, one edge, and one leg. But then for $g\ge 2$ it is easily checked that the map $[-]_g$ is an isomorphism in degree $1$.

\vspace{5mm}
\noindent{\bf Remark.}
We've now defined three subrings of the Chow ring:
\[
BG^*(\Mbar_{g,n})\subseteq B^*(\Mbar_{g,n}) \subseteq R^*(\Mbar_{g,n}).
\]
It is reasonable to ask whether they are actually distinct. The difference between $B^*(\Mbar_{g,n})$ and $R^*(\Mbar_{g,n})$ is that psi classes on opposite sides of an edge must appear together in the boundary subring, so in small genus (e.g. $g\le 3$) we can use tautological relations to remove most psi classes and see that $B^*(\Mbar_{g,n}) = R^*(\Mbar_{g,n})$. For larger values of $g$, say $g\ge 6$, these rings should become different. In contrast, already for $\Mbar_{1,3}$ the subring $BG^*(\Mbar_{g,n})$ is strictly smaller than the other two.

\section{Rewriting the double ramification cycle formula}\label{sec:DR}
Let $a_1,\ldots,a_n$ be integers with sum zero. Taking the positive $a_i$ and the negative $a_i$ separately gives two partitions $\mu$, $\nu$ of the same number $k$. Also, let $n_0$ count the number of $a_i$ equal to zero. The double ramification cycle $\DR_g(a_1,\ldots,a_n) \in R^g(\Mbar_{g,n})$ can then be viewed as parametrizing curves admitting a degree $k$ map to $\P^1$ with ramification profiles $\mu$, $\nu$ over points $0,\infty$ respectively and $n_0$ internal marked points. More precisely, let $\Mbar_{g,n_0}(\P^1/\{0,\infty\},\mu,\nu)^\sim$ be the moduli space of stable relative maps to a rubber $\P^1$ (see \cite[Section 0.2.3]{FP:relative}). Then
\[
\DR_g(a_1,\ldots,a_n) = p_*[\Mbar_{g,n_0}(\P^1/\{0,\infty\},\mu,\nu)^\sim]^{\text{vir}},
\]
the pushforward of the virtual class along the natural map
\[
p: \Mbar_{g,n_0}(\P^1/\{0,\infty\},\mu,\nu)^\sim \to \Mbar_{g,n}.
\]

A complicated but explicit formula for $\DR_g(a_1,\ldots,a_n)$ in terms of tautological classes was recently proven in \cite[Theorem 1]{JPPZ}. In this section we explain how this formula can be simplified by means of the genus-free boundary classes $[G]_g$ from Section~\ref{sec:gfree}.

The DR cycle formula was written in \cite[Section 0.4.2]{JPPZ} as a sum over stable graphs decorated by certain power series in the psi classes. The psi classes appear in these power series either as the psi class associated to a leg of the graph or as the sum of the two psi classes associated to an edge of the graph. These are precisely the two types of psi decorations that appear in generalized boundary strata, and it turns out that we can simply remove the psi power series and enlarge the sum to run over the unstable graphs as well as the stable graphs.

In addition, the coefficients in this sum do not depend on how the genus is distributed over the vertices, so we can pass to the genus-free boundary strata classes. The result is the following formula.

Let $d\ge 0$ and $r > 0$ be integers. Then we can define
\begin{multline}\label{eq:DR}
\DR_g^{d,r}(a_1,\ldots,a_n) =\\
\sum_G \left(\frac{1}{r^{h_1(G)}}\sum_{w:H(G)\to \{0,\ldots,r-1\}}\prod_{e = (h,h') \in E(G)}\frac{1}{2}w(h)w(h')\right) \frac{[G]_g}{|\Aut(G)|},
\end{multline}
where the first sum is over all graphs with $n$ legs and exactly $d$ edges and the second sum is over all weightings mod $r$ of $G$, as defined in \cite[Section 0.4.1]{JPPZ}.

Then the actual DR cycle $\DR_g(a_1,\ldots,a_n)$ is the constant term in $r$ of $\DR_g^{g,r}(a_1,\ldots,a_n)$ (which is polynomial in $r$ for $r$ sufficiently large as explained in \cite{JPPZ}). In particular, $\DR_g(a_1,\ldots,a_n)$ belongs to the genus-free boundary subring $GB^*(\Mbar_{g,n})$.

\vspace{5mm}
\noindent{\bf Remark.}
There are two natural generalizations of the DR cycle formula that are discussed in \cite{JPPZ}. We will briefly discuss how they fit into our boundary ring framework.

First, the DR cycle formula is really the degree $g$ part of a class of impure degree. The parts of this formula in degrees greater than $g$ give tautological relations, as conjectured in \cite{JPPZ} and proved by Clader and Janda \cite{Clader-Janda}. These relations in degree $d > g$ are then just the constant term in $r$ of the class $\DR_g^{d,r}(a_1,\ldots,a_n)$ in \eqref{eq:DR}. In other words, the $r$-constant term of the formula \eqref{eq:DR} can be interpreted as defining a class $\DR^d(a_1,\ldots,a_n)$ in the formal graph algebra $\mathcal{G}_n$ from Section~\ref{sec:gfree}, and this class has the property that its image in $BG^*(\Mbar_{g,n})$ is equal to the DR cycle for $g = d$ and equal to zero for $g < d$.

Second, there is a $k$-twisted version of the DR cycle formula \cite[Section 1.1]{JPPZ}. This formula yields a class in the boundary subring $B^*(\Mbar_{g,n})$ but no longer in the genus-free boundary subring $GB^*(\Mbar_{g,n})$. Formula \eqref{eq:DR} will still hold with two modifications: $G$ must be replaced by a graph $\Gamma$ of genus $g$ with $n$ legs and $w$ must now be a $k$-weighting mod $r$ (this condition for $k\ne 0$ depends on the genus distribution across the vertices of the graph). The extra kappa classes appearing in the $k$-twisted DR cycle formula are assimilated into our definition of the generalized boundary strata classes just as the psi classes were in the untwisted version.

\bibliographystyle{amsplain}
\bibliography{boundary-arxiv}

\vspace{+8 pt}
\noindent
Department of Mathematics\\
Massachusetts Institute of Technology\\
apixton@mit.edu

\end{document}